\def\input@path{{\string"/Users/paranoia/Documents/Research/mypapers/Vertex decomposable graphs and obstructions to shellability/\string"/}}
\numberwithin{equation}{section} %% Comment out for sequentially-numbered
\numberwithin{figure}{section} %% Comment out for sequentially-numbered
\theoremstyle{plain}
\theoremstyle{plain}
\newtheorem{thm}{Theorem}
  \theoremstyle{remark}
  \newtheorem{note}[thm]{Note}
  \theoremstyle{plain}
  \newtheorem{lem}[thm]{Lemma}
  \theoremstyle{definition}
  \newtheorem{condition}[thm]{Condition}
  \theoremstyle{plain}
  \newtheorem{cor}[thm]{Corollary}
  \theoremstyle{remark}
  \newtheorem{rem}[thm]{Remark}
  \newcounter{casectr}
  \newenvironment{caseenv}
  {\begin{list}{{\itshape\ Case} \arabic{casectr}.}{%
   \setlength{\leftmargin}{\labelwidth}
   \addtolength{\leftmargin}{\parskip}
   \setlength{\itemindent}{\listparindent}
   \setlength{\itemsep}{\medskipamount}
   \setlength{\topsep}{\itemsep}}
   \setcounter{casectr}{0}
   \usecounter{casectr}}
  {\end{list}}
 \theoremstyle{definition}
  \newtheorem{example}[thm]{Example}
  \theoremstyle{plain}
  \newtheorem{prop}[thm]{Proposition}
\begin{document}
%
\begin{comment}
Group theory
\end{comment}
{}

\global\long\def\normalin{\mathrel{\triangleleft}}

\global\long\def\innormal{\mathrel{\triangleright}}

\global\long\def\semidirect{\mathbin{\rtimes}}

\global\long\def\Stab{\operatorname{Stab}}

%
\begin{comment}
Topology
\end{comment}
{}

\global\long\def\bdry{\partial}

\global\long\def\susp{\operatorname{susp}}

%
\begin{comment}
Poset combinatorics
\end{comment}
{}

\global\long\def\lrprod{\mathop{\check{\prod}}}

\global\long\def\lrtimes{\mathbin{\check{\times}}}

\global\long\def\urtimes{\mathbin{\hat{\times}}}

\global\long\def\urprod{\mathop{\hat{\prod}}}

\global\long\def\subsetdot{\mathrel{\subset\!\!\!\!{\cdot}\,}}

\global\long\def\dotsupset{\mathrel{\supset\!\!\!\!\!\cdot\,\,}}

\global\long\def\precdot{\mathrel{\prec\!\!\!\cdot\,}}

\global\long\def\dotsucc{\mathrel{\cdot\!\!\!\succ}}

\global\long\def\des{\operatorname{des}}

%
\begin{comment}
Lattice
\end{comment}
{}

\global\long\def\modreln{\mathrel{M}}

%
\begin{comment}
Simplicial complex combinatorics
\end{comment}
{}

\global\long\def\link{\operatorname{link}}

\global\long\def\freejoin{\mathbin{\circledast}}

\global\long\def\stellarsd{\operatorname{stellar}}

\global\long\def\conv{\operatorname{conv}}

\global\long\def\disjointunion{\mathbin{\dot{\cup}}}

%
\begin{comment}
My stuff
\end{comment}
{}

\global\long\def\cosetposet{\overline{\mathfrak{C}}}

\global\long\def\cosetlat{\mathfrak{C}}

\title{Vertex decomposable graphs \\
and obstructions to shellability}

\author{Russ Woodroofe}

\email{russw@math.wustl.edu}

\address{Department of Mathematics, Washington University in St.~Louis, St.~Louis,
Missouri, 63130}

\subjclass[2000]{13F55, 05C38, 05E99}

\keywords{Sequentially Cohen-Macaulay, independence complex, edge ideal, chordal
graphs}
\begin{abstract}
Inspired by several recent papers on the edge ideal of a graph $G$,
we study the equivalent notion of the independence complex of $G$.
Using the tool of vertex decomposability from geometric combinatorics,
we show that $5$-chordal graphs with no chordless $4$-cycles are
shellable and sequentially Cohen-Macaulay. We use this result to characterize
the obstructions to shellability in flag complexes, extending work
of Billera, Myers, and Wachs. We also show how vertex decomposability
may be used to show that certain graph constructions preserve shellability.
\end{abstract}
\maketitle

\section{Introduction}

Let $G=(V,E)$ be a graph with vertex set $V=\{x_{1},\dots,x_{n}\}$.
The \emph{independence complex} of $G$, denoted $I(G)$, is the simplicial
complex with vertex set $V$ and with faces the independent sets of
$G$. When it causes no confusion, we will say that $G$ satisfies
some property if its independence complex does. For example, we will
say that $G$ is shellable if $I(G)$ is shellable. The independence
complex has been previously studied in e.g. \cite{Aharoni/Berger/Meshulam:2005,Klivans:2007,Meshulam:2003}.

The Stanley-Reisner ring of $I(G)$ is \[
k[x_{1},\dots,x_{n}]/(x_{i}x_{j}\,:\, x_{i}x_{j}\in E).\]
The quotient in the above ring is also called the \emph{edge ideal}
of $G$ and has been an object of study in its own right \cite{Villarreal:2001}.
In particular, a recent series of papers \cite{Francisco/Ha:2008,Francisco/VanTuyl:2007,Herzog/Hibi/Zheng:2006,VanTuyl/Villarreal:2008}
has worked from the edge ideal to show that chordal graphs are sequentially
Cohen-Macaulay and shellable and that certain graph constructions
preserve shellability and/or being sequentially Cohen-Macaulay.

In this paper, we consider vertex decomposability in graphs. In Section
2, we recall the definition of a vertex decomposable simplicial complex
and show what this means for (the independence complexes of) graphs.
As an easy consequence we recover the result that chordal graphs are
shellable, hence sequentially Cohen-Macaulay. In Section 3, we give
a geometric proof that the only cyclic graphs which are vertex decomposable,
shellable and/or sequentially Cohen-Macaulay are $C_{3}$ and $C_{5}$.
In Section 4, we prove the main theorem of the paper:
\begin{thm}
\emph{(Main Theorem)} \label{thm:MainTheorem} If $G$ is a graph
with no chordless cycles of length other than $3$ or $5$, then $G$
is vertex decomposable (hence shellable and sequentially Cohen-Macaulay.)
\end{thm}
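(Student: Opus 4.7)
The plan is to induct on $|V(G)|$, using the vertex-decomposability criterion for independence complexes from Section~2: $I(G)$ is vertex decomposable provided some vertex $v$ is a shedding vertex and both $I(G \setminus v)$ and $I(G \setminus N[v])$ are vertex decomposable. The hypothesis of the theorem is preserved under passing to induced subgraphs, since any chordless cycle in an induced subgraph of $G$ is chordless in $G$; thus both $G \setminus v$ and $G \setminus N[v]$ inherit the hypothesis and by induction are vertex decomposable. The entire problem reduces to producing a shedding vertex in $G$.

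The sufficient condition I target is the pair criterion used in the chordal case (Section~2): if $v$ has a neighbor $x$ with $N_G[x] \subseteq N_G[v]$, i.e.\ $N(x) \setminus \{v\} \subseteq N(v)$, then $v$ is a shedding vertex, because $x$ extends every maximal independent set of $G \setminus N[v]$. To produce such a pair, take a longest chordless path $P = x_0 x_1 \cdots x_k$ and examine the endpoint $x_0$. For any $y \in N(x_0) \setminus \{x_1\}$, the walk $y x_0 x_1 \cdots x_k$ has length $k + 1$, so by maximality of $P$ is not chordless; the only off-path vertex is $y$, forcing a chord $y x_j$ with $j \geq 1$. Taking $j$ minimal yields a chordless cycle $y x_0 x_1 \cdots x_j y$ of length $j + 2$, and the hypothesis forces $j + 2 \in \{3, 5\}$, i.e.\ $j \in \{1, 3\}$. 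If $j = 1$ for every such $y$, then $N(x_0) \setminus \{x_1\} \subseteq N(x_1)$ and the pair $(x_1, x_0)$ witnesses $x_1$ as a shedding vertex.

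The main obstacle is the remaining case: some $y$ forces $j = 3$, producing a chordless $5$-cycle $y x_0 x_1 x_2 x_3$, and the pair $(x_1, x_0)$ no longer works. Here I aim to verify the shedding condition at $x_1$ directly. Suppose for contradiction that some maximal independent set $S$ of $G \setminus N[x_1]$ cannot be extended by any neighbor of $x_1$. Then $S$ contains both a neighbor $z_0 \in N(x_0) \setminus N[x_1]$ and a neighbor $z_2 \in N(x_2) \setminus N[x_1]$, with $z_0 \neq z_2$ (else $z_0 x_0 x_1 x_2 z_0$ is a chordless $4$-cycle) and $z_0 z_2 \notin E$ (as $S$ is independent). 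Applying the longest-path analysis to $z_0$ gives $z_0 x_3 \in E$ and a chordless $5$-cycle $z_0 x_0 x_1 x_2 x_3$; a careful case analysis on the possible position of $z_2$ relative to this $5$-cycle, combined with the excluded chordless cycles of length $4$ and length $\geq 6$, should force a forbidden configuration, yielding the contradiction. Carrying out this case analysis (and, if $x_1$ stubbornly resists, shifting the shedding attempt to a different vertex of $P$ or of the $5$-cycle, where the pair criterion becomes available) is the technical heart of the proof.
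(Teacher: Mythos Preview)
Your induction set-up and the reduction to locating a single shedding vertex are correct, and your analysis at the end of a longest chordless path is sound as far as it goes: every neighbor of $x_0$ other than $x_1$ must be adjacent to $x_1$ or to $x_3$, so either $N[x_0]\subseteq N[x_1]$ (and you are done) or a chordless $5$-cycle through $x_0,x_1,x_2,x_3$ appears.  The gap is in what follows.  When you suppose $x_1$ is not shedding and produce $z_0\in N(x_0)\setminus N[x_1]$ and $z_2\in N(x_2)\setminus N[x_1]$ with $z_0z_2\notin E$, you correctly obtain a chordless $5$-path $z_0x_0x_1x_2z_2$ --- but this does \emph{not} contradict the maximality of $P$ once $k\geq 4$.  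Your proposed fallback of ``shifting the shedding attempt to a different vertex'' is left entirely unspecified, so the argument is genuinely unfinished at exactly the point you label its technical heart, and it is not evident that an ad hoc case analysis along these lines terminates cleanly.

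The paper avoids this difficulty by replacing ``endpoint of a longest chordless path'' with the sharper notion of a \emph{simplicial $3$-path}: a chordless $3$-path $w_1vw_2$ that cannot be extended at \emph{both} ends to a chordless $5$-path.  A lemma of Chv\'atal, Rusu, and Sritharan guarantees that every $5$-chordal graph containing a chordless $3$-path contains a simplicial one.  Once you have a simplicial $3$-path with no chordless $C_4$ through it, the very argument you began --- if $v$ were not shedding, one builds a chordless $5$-path $z_1w_1vw_2z_2$ --- is an immediate contradiction to simpliciality rather than to any longest-path hypothesis.  What your longest-path approach lacks is precisely this two-sided non-extendability: your end $x_0$ cannot be extended, but the end $x_2$ generally can, which is why your contradiction evaporates for $k\geq 4$.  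Either cite the Chv\'atal--Rusu--Sritharan lemma, or prove directly that a simplicial $3$-path exists; once you do, the rest of your write-up already contains the proof.
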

In Section 5, we reinterpret Theorem \ref{thm:MainTheorem} in terms
of obstructions to shellings, answering a question of Wachs. We also
give an application to domination numbers, in the style of \cite{Meshulam:2003}.
In Section 6, we examine several graph constructions that preserve
vertex decomposability. Finally, in Section 7 we close with some comments
on classes of sequentially Cohen-Macaulay graphs.
\begin{note}
Independence complexes have been studied more extensively in the combinatorics
literature as \emph{flag complexes} \cite[Chapter III.4 and references]{Stanley:1996}.
Many papers on flag complexes study them by considering the clique
complex. We notice that the clique complex of a graph $G$ is the
independence complex of the complement graph of $G$.
\end{note}

\subsection{Cohen-Macaulay complexes}

We review briefly the background definitions from geometric combinatorics
and graph theory.

A simplicial complex $\Delta$ is \emph{pure} if all of its facets
(maximal faces) are of the same dimension. A complex $\Delta$ is
\emph{shellable} if its facets fit together nicely. The precise definition
will not be important to us, but can be found, with much additional
background, in \cite[Lecture 3]{Wachs:2007}. The \emph{link} of a
face $F$ in $\Delta$ is\[
\link_{\Delta}F=\{G\,:\, G\cup F\mbox{ is a face in }\Delta,G\cap F=\emptyset\}.\]

Let $k$ be a field or the ring of integers. A complex $\Delta$ is
\emph{Cohen-Macaulay} over $k$ if $\tilde{H}_{i}(\link_{\Delta}F;k)=0$
for all faces $F$ and $i<\dim(\link_{\Delta}F)$. More intuitively,
a complex is Cohen-Macaulay if it has the homology of a bouquet of
top-dimensional spheres and if every link satisfies the same condition.
It is a well-known fact that any Cohen-Macaulay complex is pure. Any
pure, shellable complex is Cohen-Macaulay over any $k$. Our results
will be independent of the choice of $k$, and we henceforth drop
it from our notation.

Since simplicial complexes that are not pure are often interesting,
we study Stanley's extension \cite[Chapter III.2]{Stanley:1996} of
the definition of Cohen-Macaulay (and its relationship with shellability)
to arbitrary simplicial complexes. The \emph{pure $i$-skeleton} of
$\Delta$ is the complex generated by all the $i$-dimensional faces
of $\Delta$. A complex is \emph{sequentially Cohen-Macaulay} if the
pure $i$-skeleton is Cohen-Macaulay for all $i$. Any shellable complex
is sequentially Cohen-Macaulay. 

$\Delta$ is a Cohen-Macaulay complex if and only if the Stanley-Reisner
ring of $\Delta$ is a Cohen-Macaulay ring. There is also a ring-theoretic
notion of sequentially Cohen-Macaulay \cite[Definition III.2.9]{Stanley:1996}.
For more background, refer to \cite{Bjorner/Wachs/Welker:2009} and
\cite{Stanley:1996} for the combinatorial point of view or to \cite{Bruns/Herzog:1993}
for a more ring-theoretic approach.

\subsection{Chordless paths and cycles}

A \emph{chordless path} of length $n$ in a graph $G$ is a path $v_{1},v_{2},\dots,v_{n}$
in $G$ with no \emph{chord}, i.e. with no edge $v_{i}v_{j}$ with
$j\neq i+1$. Equivalently, the induced graph on $\{v_{1},\dots,v_{n}\}$
is the path on $n$ vertices. In a like manner, a \emph{chordless
cycle} of length $n$ is an induced $n$-cycle. 

A graph is \emph{$k$-chordal} if it has no chordless cycles of length
$>k$, and \emph{chordal} if it is $3$-chordal.

\section{Vertex decomposability and shedding vertices\label{sec:VertexDecGraphs}}

A simplicial complex $\Delta$ is recursively defined to be \emph{vertex
decomposable} if it is either a simplex or else has some vertex $v$
so that
\begin{enumerate}
\item both $\Delta\setminus v$ and $\link_{\Delta}v$ are vertex decomposable,
and
\item no face of $\link_{\Delta}v$ is a facet of $\Delta\setminus v$.
\end{enumerate}
A vertex $v$ which satisfies Condition (2) is called a \emph{shedding
vertex}. Vertex decompositions were introduced in the pure case by
Provan and Billera \cite{Provan/Billera:1980} and extended to non-pure
complexes by Björner and Wachs \cite[Section 11]{Bjorner/Wachs:1997}.

A vertex decomposable complex is shellable. One proof of this fact
is via the following lemma of independent interest:
\begin{lem}
\emph{\label{lem:SheddingVertexLemma}(Wachs \cite[Lemma 6]{Wachs:1999b})}
If $\Delta$ is a simplicial complex with shedding vertex $v$ and
if both $\Delta\setminus v$ and $\link_{\Delta}v$ are shellable,
then $\Delta$ is shellable.
\end{lem}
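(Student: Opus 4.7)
The plan is to build a shelling of $\Delta$ by concatenating given shellings of $\Delta \setminus v$ and $\link_\Delta v$. Let $F_1, \ldots, F_s$ be a shelling of $\Delta \setminus v$ and $G_1, \ldots, G_t$ a shelling of $\link_\Delta v$. The candidate shelling of $\Delta$ is the concatenation
\[
F_1, \ldots, F_s,\; G_1 \cup \{v\}, \ldots, G_t \cup \{v\}.
\]

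First I would confirm that this lists exactly the facets of $\Delta$. Every facet of $\Delta$ containing $v$ has the form $G \cup \{v\}$ for $G$ a facet of $\link_\Delta v$, and every facet of $\Delta$ avoiding $v$ is a facet of $\Delta \setminus v$. The converse direction for the second class is exactly the shedding hypothesis: if some facet $F$ of $\Delta \setminus v$ were not a facet of $\Delta$, then $F \cup \{v\} \in \Delta$, making $F$ a face of $\link_\Delta v$ and contradicting shedding.

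Next I would verify the pointwise shelling condition: for $i < j$, produce $k < j$ and $x \in H_j$ with $H_i \cap H_j \subseteq H_j \setminus \{x\} = H_k \cap H_j$. Three cases split by which block $i$ and $j$ lie in. When $i, j \le s$, the shelling of $\Delta \setminus v$ applies verbatim. When $s < i < j$, write $H_i = G_{i'} \cup \{v\}$ and $H_j = G_{j'} \cup \{v\}$; if the link shelling furnishes $k' < j'$ and $x \in G_{j'}$ for the pair $G_{i'}, G_{j'}$, then $k = s + k'$ and the same $x$ work, since adjoining $v$ to both sides preserves intersections and preserves codim-$1$ faces.

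The genuinely new step is the crossing case $i \le s < j$, which is where I expect the main (small) obstacle. Write $H_j = G \cup \{v\}$. Since $G \cup \{v\} \in \Delta$, the face $G$ lies in $\Delta \setminus v$, so $G \subseteq F_l$ for some $l \le s$. Then $H_l \cap H_j = F_l \cap G = G = H_j \setminus \{v\}$, a codim-$1$ face of $H_j$, and $H_i \cap H_j = F_i \cap G \subseteq G$ trivially; so $k = l$ and $x = v$ handle every $i \le s$ uniformly. The point is that the entire facet $G$ of $\link_\Delta v$ is already covered by the first block, so a codim-$1$ face of $H_j$ is available immediately the moment we cross from the first block into the second.
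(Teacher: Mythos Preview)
Your proof is correct and follows exactly the approach indicated in the paper: the shelling of $\Delta$ is obtained by listing the facets of $\Delta\setminus v$ in their shelling order, followed by the facets of $v*\link_{\Delta}v$ in the order inherited from the shelling of $\link_{\Delta}v$. The paper only states this order and defers to Wachs for the verification; you have supplied the details, including the key observation that the shedding condition is precisely what guarantees every facet of $\Delta\setminus v$ remains a facet of $\Delta$, and that in the crossing case the facet $G$ of $\link_{\Delta}v$ sits inside some earlier $F_{l}$, yielding the required codimension-$1$ intersection $H_{j}\setminus\{v\}$.
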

\noindent The shelling order in Lemma \ref{lem:SheddingVertexLemma}
is that of $\Delta\setminus v$, followed by the facets of $v*\link_{\Delta}v$
in the order of the shelling of $\link_{\Delta}v$.

To summarize, we have the chain of implications: \[
\mbox{vertex decomposable}\implies\mbox{shellable}\implies\mbox{sequentially Cohen-Macaulay.}\]
Both implications are known to be strict.

\medskip{}
The definition of vertex decomposable (and so of shedding vertex)
translates nicely to independence complexes. Let $N(v)$ denote the
\emph{open neighborhood} of $v$, that is, all vertices adjacent to
$v$. Let $N[v]$ denote the \emph{closed neighborhood} of $v$, which
is $N(v)$ together with $v$ itself, so that $N[v]=N(v)\cup\{v\}$.
\begin{lem}
\label{lem:VertDecompGraph}An independence complex $I(G)$ is vertex
decomposable if $G$ is a totally disconnected graph (with no edges)
or if
\begin{enumerate}
\item $G\setminus v$ and $G\setminus N[v]$ are both vertex decomposable,
and
\item no independent set in $G\setminus N[v]$ is a maximal independent
set in $G\setminus v$.
\end{enumerate}
\end{lem}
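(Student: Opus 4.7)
The plan is to directly translate the recursive definition of vertex decomposability into the language of independence complexes, using the two key identities
\[
I(G)\setminus v = I(G\setminus v)
\qquad\text{and}\qquad
\link_{I(G)} v = I(G\setminus N[v]),
\]
and then to check that the base case and shedding condition correspond correctly.

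First I would verify the two identities above. The first is immediate: the faces of $I(G)\setminus v$ are exactly the independent sets of $G$ not containing $v$, which are precisely the independent sets of $G\setminus v$. For the second, $F\in\link_{I(G)}v$ means $F\cup\{v\}$ is independent in $G$ and $v\notin F$; this is equivalent to $F$ being independent and containing no neighbor of $v$, which is exactly the condition that $F$ is an independent set in the graph $G\setminus N[v]$ obtained by deleting $v$ together with all its neighbors.

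Next I would translate the base case and shedding condition. The complex $I(G)$ is a simplex precisely when every subset of $V(G)$ is independent, i.e., when $G$ has no edges; this matches the hypothesis that $G$ is totally disconnected. Under the identifications above, condition~(1) of the definition of vertex decomposable says that $I(G\setminus v)$ and $I(G\setminus N[v])$ are vertex decomposable, which by induction is exactly the stated condition~(1) on graphs. For the shedding condition, ``no face of $\link_{I(G)}v$ is a facet of $I(G)\setminus v$'' becomes ``no independent set of $G\setminus N[v]$ is a maximal independent set of $G\setminus v$,'' which is precisely condition~(2).

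There is no real obstacle here; the only point that requires care is making sure the definition of link (which excludes $v$ from its faces and requires $F\cup\{v\}\in\Delta$) lines up cleanly with deletion of the \emph{closed} neighborhood $N[v]$ rather than the open neighborhood $N(v)$, and that the base case of a simplex corresponds to the edgeless graph rather than, say, the empty graph. Both are straightforward once the identities above are in hand, so the lemma follows by unwinding definitions.
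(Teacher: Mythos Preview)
Your argument is correct and is exactly the approach the paper takes: its entire proof reads ``Translate the definitions!'', and you have simply carried out that translation in detail via the identities $I(G)\setminus v = I(G\setminus v)$ and $\link_{I(G)} v = I(G\setminus N[v])$. The mention of ``by induction'' in your condition~(1) step is unnecessary, since ``$G\setminus v$ is vertex decomposable'' already \emph{means} ``$I(G\setminus v)$ is vertex decomposable'' by the paper's convention, but this does not affect correctness.
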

\begin{proof}
Translate the definitions!
\end{proof}
A shedding vertex of $G$ is any vertex which satisfies Condition
(2) of Lemma \ref{lem:VertDecompGraph}. A useful equivalent condition
for shedding vertices is:
\begin{condition}
\label{con:SheddingCondition}For every independent set $S$ contained
in $G\setminus N[v]$, there is some $x\in N(v)$ so that $S\cup\{x\}$
is independent.
\end{condition}
We make a first observation:
\begin{lem}
\label{lem:NeighborhoodContainmentShedding}If $N[v]\subseteq N[w]$
(so that in particular $v$ and $w$ are adjacent), then $w$ is a
shedding vertex for $G$.\end{lem}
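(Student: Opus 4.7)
The plan is to verify Condition \ref{con:SheddingCondition} for $w$ by exhibiting an explicit neighbor, namely $v$ itself. Since $v \in N[v] \subseteq N[w]$ and $v \neq w$ (otherwise the hypothesis degenerates and there is nothing to prove about adjacency), we have $v \in N(w)$, so $v$ is a legitimate candidate for the $x$ demanded by the shedding condition.

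Now let $S$ be any independent set of $G$ contained in $G \setminus N[w]$. I claim $S \cup \{v\}$ is independent. Since $S$ is already independent, it suffices to check that $v$ is non-adjacent to every vertex of $S$. Suppose, for contradiction, that some $s \in S$ lies in $N(v)$. Then $s \in N(v) \subseteq N[v] \subseteq N[w]$, contradicting $S \subseteq G \setminus N[w]$. Thus no such $s$ exists, and $S \cup \{v\}$ is independent.

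This verifies the equivalent formulation of the shedding condition given in Condition \ref{con:SheddingCondition}, so $w$ is a shedding vertex. There is no real obstacle here: the proof is essentially a one-line unpacking of the neighborhood containment, and the only mild subtlety is the parenthetical observation justifying that $v$ and $w$ are genuinely adjacent (so that $v$ qualifies as an element of $N(w)$).
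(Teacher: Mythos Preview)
Your proof is correct and follows essentially the same approach as the paper: both verify Condition~\ref{con:SheddingCondition} by taking $x = v$ and observing that $N(v) \subseteq N[w]$ forces $v$ to be non-adjacent to every vertex of $G \setminus N[w]$. Your version is slightly more explicit in checking that $v \in N(w)$ and in phrasing the non-adjacency as a contradiction, but the idea is identical.
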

\begin{proof}
Since every neighbor of $v$ is also a neighbor of $w$, there are
no edges from $v$ to any vertex of $G\setminus N[w]$. Thus, any
$v$ can be added to any independent set in $G\setminus N[w]$ while
preserving independence.
\end{proof}
Recall that a \emph{simplicial vertex} is a vertex $v$ such that
$N[v]$ is a clique. A well-known theorem of Dirac \cite[Theorem 6.3]{Alfonsin/Reed:2001}
says that every chordal graph has a simplicial vertex. Thus, we have:
\begin{cor}
\label{cor:SimplicialVertexIsShedding}~
\begin{enumerate}
\item Any neighbor of a simplicial vertex is a shedding vertex for $G$.
\item A chordal graph is vertex decomposable (hence shellable \cite[Theorem 1.2]{VanTuyl/Villarreal:2008}
and sequentially Cohen-Macaulay \cite[Theorem 1.2]{Francisco/VanTuyl:2007};
also closely related is \cite[Theorem 1.4]{Meshulam:2003}).
\end{enumerate}
\end{cor}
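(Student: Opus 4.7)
The plan is to deduce both parts from Lemma \ref{lem:NeighborhoodContainmentShedding} and Lemma \ref{lem:VertDecompGraph} via Dirac's theorem and induction on $|V(G)|$.

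For part (1), I would simply check that $N[v] \subseteq N[w]$ whenever $v$ is simplicial and $w \in N(v)$: since $N[v]$ is a clique containing $w$, every other element of $N[v]$ is adjacent to $w$, while $v$ itself lies in $N(w)$. Lemma \ref{lem:NeighborhoodContainmentShedding} then immediately yields that $w$ is a shedding vertex.

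For part (2), I would induct on $|V(G)|$, with the edge-free case covered by the definition of vertex decomposable. Dirac's theorem supplies a simplicial vertex $v$, and in the main case $v$ has at least one neighbor $w$; part (1) then makes $w$ a shedding vertex of $G$. Both $G \setminus w$ and $G \setminus N[w]$ are induced subgraphs of $G$, so both are chordal on fewer vertices, hence vertex decomposable by the inductive hypothesis; Lemma \ref{lem:VertDecompGraph} closes the case.

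The main (and only) obstacle I anticipate is the degenerate case in which every simplicial vertex of $G$ happens to be isolated, so that part (1) does not apply. Here I would observe that an isolated vertex $v$ makes $I(G) = v * I(G \setminus v)$ a cone, and dispose of the case by applying induction to the chordal graph $G \setminus v$ together with a short direct check that coning preserves vertex decomposability: any shedding vertex $w$ of the base remains shedding in the cone, because every facet of the deletion $(v*\Delta) \setminus w = v * (\Delta \setminus w)$ contains the apex $v$ and so cannot coincide with a face of the link $v * \link_{\Delta} w$ that omits $v$.
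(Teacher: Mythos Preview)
Your approach is the paper's: for (1) you verify $N[v]\subseteq N[w]$ and invoke Lemma~\ref{lem:NeighborhoodContainmentShedding}; for (2) you apply Dirac's theorem and induct on induced subgraphs, exactly as the paper does.

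The ``degenerate case'' you worry about is a phantom. If $G$ has an edge, apply Dirac's theorem to the connected component containing that edge: a simplicial vertex of that component has a neighbor there, and it remains simplicial in all of $G$. Hence a chordal graph that is not totally disconnected always has a \emph{non-isolated} simplicial vertex, and part (1) applies directly. The paper uses this without comment.

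Your coning argument, though unnecessary, also has a small gap. You note that every facet of $v*(\Delta\setminus w)$ contains the apex $v$, and then rule out only those faces of $\link_{v*\Delta}w = v*\link_\Delta w$ that \emph{omit} $v$. But faces of the link of the form $\{v\}\cup F$ with $F\in\link_\Delta w$ could still, a priori, be facets of $v*(\Delta\setminus w)$; ruling these out is exactly the shedding condition for $w$ in $\Delta$, which you never invoke. The claim is true, just not for the reason you give. (Alternatively, cone preservation of vertex decomposability is the single-vertex case of Lemma~\ref{lem:DisjointUnionIsJoin}.)
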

\begin{proof}
If $v$ is a simplicial vertex and $w$ is a neighbor of $v$, then
$N[v]\subseteq N[w]$, and Lemma \ref{lem:NeighborhoodContainmentShedding}
gives (1). For (2), the theorem of Dirac thus says that a chordal
graph has a shedding vertex $w$ if it is not totally disconnected.
Since every induced subgraph of a chordal graph is chordal, both $G\setminus w$
and $G\setminus N[w]$ are inductively vertex decomposable.\end{proof}
\begin{rem}
Vertices satisfying the condition of Lemma \ref{lem:NeighborhoodContainmentShedding}
have been studied before under the name dominant vertices, in the
context of so-called dismantlable graphs \cite{Ginsburg:1994,Boulet/Fieux/Jouve:2008UNP}.
However, dismantlability is a tool for understanding the homotopy
type of the clique complex of $G$, i.e., for understanding the independence
complex of the complement of $G$. Since $v$ and $w$ will not be
adjacent in the complement, there does not seem to be any direct interpretation
of dismantlability in terms of vertex decomposability.
\end{rem}

\begin{rem}
Anton Dochtermann and Alexander Engström also examined vertex decomposability
in graphs, independently and at about the same time \cite[Section\ 4]{Dochtermann/Engstrom:2009}
as I did. In particular, they prove Corollary \ref{cor:SimplicialVertexIsShedding},
and a special case of Proposition \ref{pro:PendantGen2}; they also
notice that the result of Billera and Myers discussed in Section \ref{sub:Obstructions}
is a special case of Corollary \ref{cor:SimplicialVertexIsShedding}.
\end{rem}

\section{Cyclic graphs}

Corollary \ref{cor:SimplicialVertexIsShedding} (2) states that if
$G$ has no chordless cycles of length greater than 3, then it is
vertex decomposable. Let $C_{n}$ be the cyclic graph on $n$ vertices.
We discuss a partial converse:
\begin{thm}
\label{thm:CyclicNotShellable}\emph{(Francisco/Van Tuyl \cite[Proposition 4.1]{Francisco/VanTuyl:2007})}
$C_{n}$ is vertex decomposable/shellable/sequentially Cohen-Macaulay
if and only if $n=3$ or $5$.
\end{thm}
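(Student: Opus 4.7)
For the forward direction, $C_{3}$ is chordal so Corollary~\ref{cor:SimplicialVertexIsShedding}(2) applies directly. For $C_{5}$ I would exploit its cyclic symmetry: fix any vertex $v$, and observe that $I(C_{5})\setminus v = I(P_{4})$ is vertex decomposable (since $P_{4}$ is chordal), while $\link_{I(C_{5})} v = I(C_{5}\setminus N[v])$ is the independence complex of a single edge, i.e.\ two isolated points, which is trivially vertex decomposable. The shedding property via Condition~\ref{con:SheddingCondition} is then easy to check: each of the three independent subsets of the two non-neighbors of $v$ can be extended by a neighbor of $v$ to an independent pair in $C_{5}$.

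For the reverse direction, assume $n\neq 3,5$. By the chain of implications in Section~\ref{sec:VertexDecGraphs}, it suffices to show $I(C_{n})$ is not sequentially Cohen-Macaulay. I would use the fact that sequential Cohen-Macaulayness forces the pure top-skeleton $(I(C_{n}))^{[d]}$, where $d = \dim I(C_{n}) = \lfloor n/2\rfloor - 1$, to itself be Cohen-Macaulay, and in particular connected when $d\geq 1$.

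If $n\geq 4$ is even, then a simple counting argument shows that the only maximum independent sets of $C_{n}$ are the two color classes $\{1,3,\ldots,n-1\}$ and $\{2,4,\ldots,n\}$ of the bipartition, each of size $n/2$. Hence $(I(C_{n}))^{[d]}$ is the disjoint union of two $(n/2-1)$-simplices, which is disconnected for $n\geq 4$ and therefore not Cohen-Macaulay. The main obstacle is the case of $n\geq 7$ odd. Here I would compute the homotopy type of $I(C_{n})$ (either by an inductive discrete-Morse-style fold argument, or by quoting Kozlov's computation), finding that its reduced homology is concentrated in dimension $\lfloor (n-1)/3\rfloor$, which is strictly less than $d$ for every $n\geq 7$. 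For $n=7$, where $I(C_{7})$ is already pure of dimension $2$ but homotopy equivalent to $S^{1}$, this immediately contradicts Cohen-Macaulayness. For odd $n\geq 9$, $I(C_{n})$ is no longer pure, so a parallel analysis of $(I(C_{n}))^{[d]}$ is required; I expect this to follow by tracking how the maximum independent sets sit in $C_{n}$ in a manner analogous to the even case, and this is the step I anticipate being most technical.
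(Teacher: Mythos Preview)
Your forward direction and the even case are fine and essentially match the paper. The real issue is odd $n\geq 7$, and especially $n\geq 9$, where you yourself flag a gap. Two points.

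First, the detour through Kozlov's homotopy type of the \emph{full} complex $I(C_{n})$ is not the right tool. A sequentially Cohen--Macaulay complex can certainly carry reduced homology strictly below its top dimension (for instance, the disjoint union of a $2$-sphere and an isolated vertex is sequentially Cohen--Macaulay with $\tilde{H}_{0}\neq 0$), so once $I(C_{n})$ is non-pure Kozlov's computation rules out nothing. It buys you only the single case $n=7$, and you are then still obliged to analyse the pure top skeleton for $n\geq 9$, which is the whole difficulty.

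Second, your expectation that the odd case will go ``in a manner analogous to the even case'' is misplaced: for $n=2r+1$ the maximum independent sets overlap heavily (consecutive ones share $r-1$ vertices), so the pure top skeleton is connected and no disconnection argument is available. The paper instead analyses that pure skeleton directly and uniformly for all odd $n=2r+1$. Its facets are precisely the $n$ sets $F_{i}=\{i,i+2,\dots,i+2(r-1)\}$ in $\mathbb{Z}/n\mathbb{Z}$; each $F_{i}$ has only two non-free codimension-one faces, namely $F_{i}\cap F_{i\pm 2}$, so an elementary collapse at every $F_{i}$ drops the dimension by one while preserving the same cyclic pattern. Iterating collapses the pure $(r-1)$-skeleton down to a $1$-complex, which (because $n$ is odd) is a single $n$-cycle, hence homotopy equivalent to $S^{1}$. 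Since $r-1>1$ for $n\geq 7$, this pure skeleton is not Cohen--Macaulay, and $I(C_{n})$ is not sequentially Cohen--Macaulay. This elementary-collapse argument is exactly the ``most technical'' step you left open, and it is both short and uniform in $n$; no appeal to Kozlov is needed anywhere.
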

Theorem \ref{thm:CyclicNotShellable} was proved with algebraic techniques
in \cite[Proposition 4.1]{Francisco/VanTuyl:2007}. We give a geometric
proof here. 

We start with a technical lemma:
\begin{lem}
\label{lem:OddTechnicalLemma}Let $n=2r+1$, $0<d<r$. Let the $d$-dimensional
complex $\Delta_{n}^{d}$ be the complex with vertex set $\mathbb{Z}/n\mathbb{Z}$
and with facets $F_{i}=\{i,i+2,\dots,i+2d\}$ for $i=1,\dots,n$.
Then $\Delta_{n}^{d}\cong S^{1}$. \end{lem}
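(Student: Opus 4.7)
The plan is to recognize $\Delta_{n}^{d}$ as the nerve of a good open cover of $S^{1}$, after first simplifying its description via a natural symmetry of $\mathbb{Z}/n\mathbb{Z}$.

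Since $n=2r+1$ is odd, the element $2$ is invertible in $\mathbb{Z}/n\mathbb{Z}$; multiplication by $2^{-1}$ is a bijection of the vertex set, and because it is additive it carries each facet $F_{i}=\{i,i+2,\dots,i+2d\}$ to the cyclic interval $\{2^{-1}i,\,2^{-1}i+1,\,\dots,\,2^{-1}i+d\}$ of $d+1$ consecutive integers. So $\Delta_{n}^{d}$ is simplicially isomorphic to the complex $\Delta'$ whose facets are all cyclic intervals of length $d+1$ in $\mathbb{Z}/n\mathbb{Z}$, and it suffices to prove $\Delta'\simeq S^{1}$.

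To do so, I would realize $S^{1}$ as $\mathbb{R}/n\mathbb{Z}$ and for each $i\in\mathbb{Z}/n\mathbb{Z}$ introduce the open arc $U_{i}=(i-\tfrac{1}{2},\,i+d+\tfrac{1}{2})$ of length $d+1$. These cover $S^{1}$, and a direct unwinding of the defining inequalities shows that a family of indices $\{i_{1},\dots,i_{k}\}$ satisfies $\bigcap_{l}U_{i_{l}}\neq\emptyset$ precisely when it lies in some cyclic window of $d+1$ consecutive integers---that is, precisely when it is a face of $\Delta'$. Thus the nerve of $\{U_{i}\}$ is $\Delta'$.

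The main obstacle is the remaining check that the cover is \emph{good} (every nonempty intersection contractible). The hypothesis $d<r$ gives $d+1\le r<n/2$, which ensures both that each $U_{i}$ is a proper (hence contractible) subarc of $S^{1}$ and that no two $U_{i}$'s together cover the whole circle; consequently every nonempty intersection of $U_{i}$'s is a single open subarc, hence contractible. The nerve lemma then delivers $\Delta'\simeq S^{1}$, as desired.
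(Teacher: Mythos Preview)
Your argument is correct and genuinely different from the paper's. The paper proceeds by elementary collapses: for $d>1$ each facet $F_{i}$ has exactly two codimension-one neighbors $F_{i\pm2}$, and collapsing the free face $\{i,i+2d\}$ out of every $F_{i}$ simultaneously leaves precisely $\Delta_{n}^{d-1}$; iterating brings the complex down to $\Delta_{n}^{1}$, which (because $2$ generates $\mathbb{Z}/n\mathbb{Z}$) is the $n$-cycle. Your route instead uses the automorphism $x\mapsto 2^{-1}x$ up front to turn the facets into honest cyclic intervals, and then invokes the nerve lemma for the arc cover $U_{i}=(i-\tfrac12,i+d+\tfrac12)$ of $\mathbb{R}/n\mathbb{Z}$. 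The paper's approach is more elementary---it needs nothing beyond the notion of an elementary collapse---and in fact yields the slightly stronger conclusion of simple-homotopy equivalence. Your approach is more conceptual and one-shot, but imports the nerve lemma as a black box. One small point worth tightening: the sentence ``no two $U_{i}$'s together cover the whole circle; consequently every nonempty intersection of $U_{i}$'s is a single open subarc'' is doing a little inductive work you have not spelled out. The cleanest way to see it is that if $x\in\bigcap_{l}U_{i_{l}}$ then, since each $U_{i_{l}}$ has length $d+1<n/2$, the antipode $x+n/2$ lies in none of them; cutting the circle there makes every $U_{i_{l}}$ an interval in $\mathbb{R}$, and intervals intersect in intervals.
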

\begin{proof}
Consider $\Delta_{n}^{d}$ for $d>1$. A facet $F_{i}$ has codimension
$1$ intersection with two other facets: $F_{i-2}$ and $F_{i+2}$.
Since $d>1$, and since all codimension $1$ faces of $F_{i}$ other
than $F_{i}\cap F_{i-2}$ and $F_{i}\cap F_{i+2}$ are {}``free''
(contained in a unique facet), we can collapse $F_{i}$ onto $F_{i}\cap F_{i-2}$
and $F_{i}\cap F_{i+2}$. More formally, the face $\{i,i+2d\}\subset F_{i}$
is free, so we can remove all faces containing $\{i,i+2d\}$ via an
elementary collapse \cite[Section 11.1]{Bjorner:1995}, which preserves
homotopy type. Every face $F$ not containing $\{i,i+2d\}$ is in
either $F_{i}\cap F_{i+2}$ (if $i\notin F$) or $F_{i}\cap F_{i-2}$
(if $i+2d\notin F)$.

Performing a similar collapse at each $F_{i}$ leaves us a simplicial
complex with facets $F_{i}\cap F_{i-2}$ for $i=1,\dots,n$. But $F_{i}\cap F_{i-2}=\{i,\dots,i+2d-2\}$,
and we see that we have collapsed $\Delta_{n}^{d}$ to $\Delta_{n}^{d-1}$.
Thus, $\Delta_{n}^{d}\cong\Delta_{n}^{d-1}$ when $d>1$.

Since $n$ is odd, repeatedly adding 2 to some $i\in\mathbb{Z}/n\mathbb{Z}$
will cover all vertices; hence $\Delta_{n}^{1}$ is the 1-complex
$C_{n}\cong S^{1}$.
\end{proof}
\begin{figure}[h]
\includegraphics{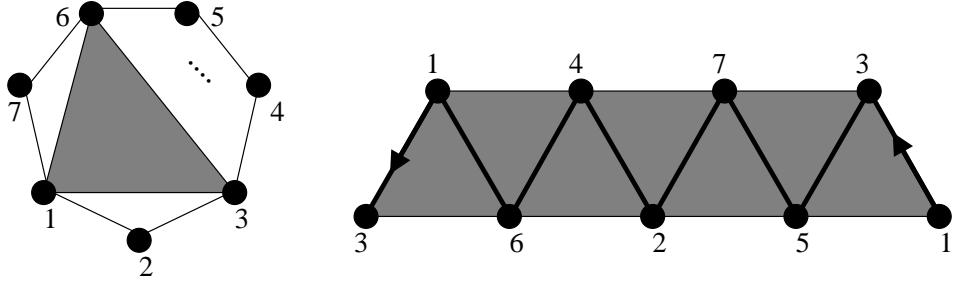}

\caption{$I(C_{7})$ is the Möbius strip. The dark line shows the subcomplex
$\Delta_{7}^{1}$. \label{fig:I(C_7)}}

\end{figure}

\begin{proof}
\emph{(Of Theorem \ref{thm:CyclicNotShellable})} The if direction
is easy: the independence complex of $C_{3}$ is three disconnected
vertices, while that of $C_{5}$ is $C_{5}$ as a 1-complex. Both
are clearly vertex decomposable.

In the other direction, we show that the pure $d$-skeleton is not
Cohen-Macaulay, where $d$ is the top dimension of the complex (i.e.,
$d=\dim I(C_{n})$). There are two cases, based on whether $n$ is
even or odd. (It may be helpful to look at Figures \ref{fig:I(C_7)}
and \ref{fig:I(C_6)} while reading the following.)
\begin{caseenv}
\item $n=2r$. Then the top-dimensional facets have dimension $r-1$, and
there are two of them: one with all even vertices, the other with
all odd vertices. As the pure $r-1$ skeleton is not even connected,
it is certainly not Cohen-Macaulay.

\begin{figure}[h]
\includegraphics{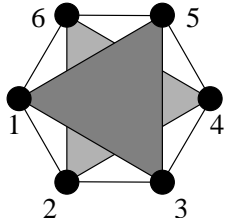}

\caption{The pure 2-skeleton of $I(C_{6})$ has two disconnected faces.\label{fig:I(C_6)}}

\end{figure}

\item $n=2r+1$, where $r\geq2$. Then the top-dimensional facets once more
have dimension $r-1$. All such facets are obtained by taking a sequence
of $r$ alternating vertices in $C_{n}$, with one skip of 2 vertices.
We see that the top-dimensional skeleton of $I(C_{n})$ is the complex
$\Delta_{n}^{r-1}$ discussed in Lemma \ref{lem:OddTechnicalLemma},
and so homotopic to $S^{1}$. Thus, the pure $r-1$ skeleton is Cohen-Macaulay
only when $r-1=1$, i.e., when $n=5$.
\end{caseenv}

Since every pure skeleton of a sequentially Cohen-Macaulay complex
is Cohen-Macaulay, we have shown that $C_{n}$ is not sequentially
Cohen-Macaulay (hence not shellable or vertex decomposable) for $n\neq3,5$.\end{proof}
\begin{example}
The pure 2-skeleton of $I(C_{6})$ consists of two disconnected triangles,
as shown in Figure \ref{fig:I(C_6)}, while $I(C_{7})$ is a (pure)
triangulation of the Möbius strip, as seen in Figure \ref{fig:I(C_7)}.
Lemma \ref{lem:OddTechnicalLemma} collapses $I(C_{7})$ to the cycle
$1,3,5,7,2,4,6$.\end{example}
\begin{rem}
We computed the homotopy type of the top-dimensional skeleton of $I(C_{n})$.
The homotopy type of the entire complex $I(C_{n})$ was calculated
by Kozlov \cite[Proposition 5.2]{Kozlov:1999}.
\end{rem}

\section{Proof of main theorem\label{sec:ProofOfMainTheorem}}

The previous two sections motivate the result of Theorem \ref{thm:MainTheorem}.
In this section, we will give a proof.

A \emph{simplicial $k$-path} in $G$ is a chordless path $v_{1},v_{2},\dots,v_{k}$
which cannot be extended on both ends to a chordless path $v_{0},v_{1},\dots,v_{k},v_{k+1}$
in $G$. Thus, a simplicial vertex is a simplicial $1$-path.

Chvátal, Rusu, and Sritharan \cite{Chvatal/Rusu/Sritharan:2002} proved
a nice generalization of Dirac's Theorem to $(k+2)$-chordal graphs
using simplicial $k$-paths. The following lemma of theirs will allow
us to use the $5$-chordal structure of $G$.
\begin{lem}
\emph{\label{lem:Chv=0000E1talRusuSritharan}(Chvátal, Rusu, and Sritharan
}\cite[Lemma 3]{Chvatal/Rusu/Sritharan:2002}\emph{) }If $G$ is a
$5$-chordal graph and $G$ contains a chordless $3$-path $P_{3}$,
then $G$ contains a simplicial $3$-path.\end{lem}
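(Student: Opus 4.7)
The plan is to argue by contradiction using an extremal chordless path. Assume $G$ is $5$-chordal, contains a chordless $3$-path, but admits no simplicial $3$-path. The failure of a chordless $3$-path $v_{1},v_{2},v_{3}$ to be simplicial means there exist $v_{0},v_{4}$ with $v_{0},v_{1},v_{2},v_{3},v_{4}$ a chordless $5$-path, i.e.\ a simultaneous extension at both ends.

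First I would choose a chordless path $P:w_{1},\dots,w_{m}$ of maximum length in $G$; since $G$ has a chordless $3$-path, $m\geq 3$. If $m=3$, then maximality of $P$ forbids any extension of $P$ at either endpoint, let alone both simultaneously, so $P$ itself is simplicial, contradicting our assumption. Thus $m\geq 4$. Applying the non-simplicial hypothesis to the initial subpath $w_{1},w_{2},w_{3}$ yields vertices $u,v$ with $u,w_{1},w_{2},w_{3},v$ chordless. By maximality of $P$, the path $u,w_{1},\dots,w_{m}$ is not chordless, and since $u$ is non-adjacent to $w_{2},w_{3}$ by chordlessness of the $5$-path, $u$ must be adjacent to some $w_{i}$ with $i\geq 4$. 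Choosing $i$ smallest, the cycle $u,w_{1},w_{2},\dots,w_{i},u$ is chordless of length $i+1\geq 5$, so $5$-chordality forces $i=4$. A symmetric analysis at the right end applied to $w_{m-2},w_{m-1},w_{m}$ gives an analogous extension vertex attaching to exactly $w_{m-3}$ and $w_{m}$.

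The main obstacle---and the bulk of the real proof---is to promote these local $5$-cycles into a global contradiction. My strategy would be to iterate the non-simplicial hypothesis along the interior $3$-subpaths $w_{i},w_{i+1},w_{i+2}$ of $P$, and then argue that the accumulated extension vertices and their attachments back to $P$ either (i) produce a chordless path strictly longer than $P$, contradicting maximality, or (ii) close up into a chordless cycle of length at least $6$, contradicting $5$-chordality. The delicate part will be the small cases ($m=4$ or $m=5$, where the end-$5$-cycles overlap) and ruling out that a new extension vertex coincides with, or is adjacent to, interior vertices of $P$ in ways that shorten the prospective long cycle; here I would likely need a careful choice of which $3$-path in $P$ to apply the non-simplicial hypothesis to, perhaps together with a secondary extremal choice (e.g.\ minimizing some ``attachment distance'') to keep the bookkeeping tractable.
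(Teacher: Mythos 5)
This lemma is imported verbatim from Chv\'atal, Rusu, and Sritharan; the paper cites their result and gives no proof of its own, so there is no ``paper's proof'' to compare against. Evaluated on its own terms, your write-up is an honest but incomplete sketch rather than a proof.

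The initial setup is sound: taking a longest chordless path $P: w_1,\dots,w_m$, disposing of $m=3$ by maximality, and deriving for $m\geq 4$ that the left extension vertex $u$ (from applying non-simpliciality to $w_1,w_2,w_3$) has its \emph{first} attachment to $P$ beyond $w_1$ at $w_4$, producing the chordless $5$-cycle $u,w_1,w_2,w_3,w_4,u$, is all correct. (One can also note $u,v\notin P$ and hence $m\geq 5$, since $u,w_1,w_2,w_3,v$ would otherwise beat $P$.) However, your phrase ``attaching to exactly $w_{m-3}$ and $w_m$'' overclaims: the minimality argument only pins down the \emph{nearest} attachment; $5$-chordality permits further attachments of $u$ (resp.\ the right-hand extension vertex) to $P$ as long as consecutive attachment indices differ by at most $3$, so $u$ could, for instance, also be adjacent to $w_5$, $w_6$, or $w_7$. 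This matters because any iteration scheme over interior $3$-subpaths has to cope precisely with such extra chords shortening the long cycle or path you hope to build.

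The more serious issue is that, as you yourself say, the ``bulk of the real proof'' is in promoting the local $5$-cycles to a global contradiction, and this step is not carried out. The plan as stated (apply the hypothesis to each interior $w_i,w_{i+1},w_{i+2}$ and hope the new vertices either stretch $P$ or close a long chordless cycle) runs into exactly the complications you flag and more: applied to an interior triple, the two extension vertices can simply be $w_{i-1}$ and $w_{i+3}$, in which case you learn nothing new; when they are new vertices, their attachments to $P$ are not controlled to a single edge; and unrelated extension vertices obtained from different triples may be adjacent to one another, defeating any attempt to concatenate them into a long chordless path. A secondary extremal choice is likely needed, but you do not specify one that works, and it is not clear that the na\"ive longest-path invariant alone suffices. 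As it stands, the write-up correctly identifies where the difficulty lies and gets the easy local structure right, but does not close the argument, and one of the intermediate claims (``exactly'') is false as stated.
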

\begin{note}
From a geometric combinatorics point of view, it might make more sense
to count edge length and have the above definition be a simplicial
$(k-1)$-path, so that a simplicial vertex would be a simplicial $0$-path.
However, to avoid confusion, I have kept the original, more graph-theoretic
definition.
\end{note}
We also need to use the lack of chordless $4$-cycles:
\begin{lem}
\label{lem:C4FreeSimp3Path}Let $w_{1},v,w_{2}$ be a simplicial $3$-path
which is not a subgraph of any chordless $C_{4}$ in $G$. Then $v$
is a shedding vertex.\end{lem}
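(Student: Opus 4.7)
The plan is to verify Condition~\ref{con:SheddingCondition} directly for the vertex $v$. Given an independent set $S\subseteq V(G)\setminus N[v]$, I will show that at least one of the two neighbors $w_1,w_2\in N(v)$ can be adjoined to $S$ while preserving independence. Suppose for contradiction that neither can, so that there exist $s_1,s_2\in S$ with $s_1\sim w_1$ and $s_2\sim w_2$. The goal is to extract from these vertices either a chordless $C_4$ containing $w_1,v,w_2$ or a chordless $5$-path extending $w_1,v,w_2$ on both ends, each contradicting one of the hypotheses.

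First I would dispose of the case $s_1=s_2$. Then $v,w_1,s_1,w_2$ is a $4$-cycle whose two potential chords are $vs_1$, which fails because $s_1\notin N[v]$, and $w_1w_2$, which fails because $w_1,v,w_2$ is chordless. So this is a chordless $C_4$ containing the simplicial $3$-path, contradicting our assumption.

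The remaining case is $s_1\neq s_2$; here I would consider the $5$-tuple $s_1,w_1,v,w_2,s_2$. The non-adjacencies $s_1\not\sim v$, $s_2\not\sim v$, and $w_1\not\sim w_2$ hold as above, while $s_1\not\sim s_2$ holds by independence of $S$. If both $s_1\not\sim w_2$ and $w_1\not\sim s_2$, then $s_1,w_1,v,w_2,s_2$ is a chordless $5$-path extending $w_1,v,w_2$ on both ends, contradicting simpliciality. If instead $s_1\sim w_2$, then $v,w_1,s_1,w_2$ is a chordless $C_4$ (same chord analysis as before); symmetrically, if $w_1\sim s_2$, then $v,w_1,s_2,w_2$ is a chordless $C_4$. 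Either outcome contradicts the $C_4$-freeness hypothesis on $w_1,v,w_2$.

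There is no serious obstacle here beyond organizing the case analysis; the main point is that the two hypotheses on $w_1,v,w_2$ (simpliciality of the $3$-path and absence of any chordless $C_4$ containing it) together preclude precisely the configurations that would witness the failure of Condition~\ref{con:SheddingCondition}.
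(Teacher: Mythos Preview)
Your argument is correct and follows essentially the same approach as the paper's proof: assume Condition~\ref{con:SheddingCondition} fails, pick witnesses adjacent to $w_1$ and $w_2$, and show they yield either a chordless $C_4$ through $w_1,v,w_2$ or a chordless $5$-path extending $w_1,v,w_2$. The only cosmetic difference is that the paper front-loads the $C_4$ observation (any vertex adjacent to both $w_1$ and $w_2$ must also be adjacent to $v$), which simultaneously disposes of your $s_1=s_2$ case and the cross-adjacencies $s_1\sim w_2$, $s_2\sim w_1$, leaving only the chordless $5$-path contradiction; your case split accomplishes the same thing.
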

\begin{proof}
We first notice that, since there is no edge $w_{1}w_{2}$, that any
$z$ adjacent to both $w_{1}$ and $w_{2}$ must also be adjacent
to $v$. Otherwise, $w_{1},v,w_{2},z$ would be a chordless $4$-cycle. 

Suppose by contradiction that $v$ is not a shedding vertex. Then
by Lemma \ref{lem:VertDecompGraph} and Condition \ref{con:SheddingCondition},
there is an independent set in $G\setminus N[v]$ which contains a
vertex $z_{1}$ adjacent to $w_{1}$, and a vertex $z_{2}$ adjacent
to $w_{2}$. Since $z_{1},z_{2}\in G\setminus N[v]$, neither is adjacent
to $v$. No $z$ in $G\setminus N[v]$ is adjacent to both $w_{1}$
and $w_{2}$, so $z_{1}$ is not adjacent to $w_{2}$, and $z_{2}$
is not adjacent to $w_{1}$. Since $z_{1}$ and $z_{2}$ are in an
independent set, $z_{1}$ is not adjacent to $z_{2}$. 

Counting non-adjacent pairs of vertices, we have just shown that $z_{1},w_{1},v,w_{2},z_{2}$
is a chordless path, which contradicts the definition of simplicial
$3$-path. Thus $v$ is a shedding vertex, as desired.
\end{proof}

\begin{proof}
\emph{(Of Theorem \ref{thm:MainTheorem})} If $G$ is chordal, then
$G$ is vertex decomposable, as shown in Corollary \ref{cor:SimplicialVertexIsShedding}.
Otherwise, $G$ has some chordless $5$-cycle, hence a chordless $3$-path,
and by Lemma \ref{lem:Chv=0000E1talRusuSritharan} a simplicial $3$-path.
Lemma \ref{lem:C4FreeSimp3Path} shows that the middle vertex of any
simplicial $3$-path in $G$ is a shedding vertex, and so by induction
$G$ is vertex decomposable.
\end{proof}

\section{Applications}

\subsection{Obstructions\label{sub:Obstructions}}

An \emph{obstruction} \emph{to shellability} is a non-shellable complex,
all of whose proper subcomplexes are shellable. Thus, any non-shellable
complex must contain at least one obstruction to shellability, while
a shellable complex may or may not contain some obstructions to shellability
as proper subcomplexes.

The \emph{order complex} of a poset is the simplicial complex with
vertex set the elements of the poset and with face set the chains
of comparable elements. Thus, the order complex of $P$ is the independence
complex of the incomparability graph on $P$, which puts an edge between
two elements if they are incomparable. 

The study of obstructions to shellability was initiated by Billera
and Myers, with the following theorem:
\begin{thm}
\emph{\label{thm:Billera-and-Myers}(Billera and Myers \cite[Corollary 1]{Billera/Myers:1998})}
If $P$ is a non-shellable poset, then $P$ contains an induced subposet
isomorphic to the poset $D=\{\mbox{two disjoint edges}\}$.
\end{thm}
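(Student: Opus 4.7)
The plan is to pass from the poset $P$ to its incomparability graph $G$ and invoke the Main Theorem. Since the excerpt tells us that the order complex of $P$ equals the independence complex of $G$, the theorem is equivalent by contrapositive to saying that if $P$ has no induced copy of $D$, then $I(G)$ is shellable. The crucial observation is that the incomparability graph of the poset $D$ is precisely $K_{2,2}=C_{4}$, so $P$ contains $D$ as an induced subposet if and only if $G$ contains $C_{4}$ as an induced subgraph. The theorem therefore reduces to: if the incomparability graph $G$ of a poset has no induced $C_{4}$, then $I(G)$ is shellable.

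To apply Theorem \ref{thm:MainTheorem}, I would show that $G$ has no chordless cycle of length other than $3$ or $5$. The length-$4$ case is precisely the hypothesis. For the remaining lengths, I claim the stronger structural fact: the incomparability graph of any poset has no induced cycle of length at least $5$ whatsoever. Granting this, Theorem \ref{thm:MainTheorem} delivers vertex decomposability and hence shellability.

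For odd lengths $k\geq5$, the claim follows from the perfectness of comparability graphs: an induced $C_{k}$ in $G$ would give an induced odd antihole in $\bar{G}$, contradicting perfectness. For even lengths $k=2m\geq6$, I would argue directly. In an induced $2m$-cycle $v_{1}v_{2}\cdots v_{2m}v_{1}$ of $G$, the odd- and even-indexed vertices consist of pairwise non-adjacent vertices of $G$, hence form two chains $v_{i_{1}}<v_{i_{2}}<\cdots<v_{i_{m}}$ and $v_{j_{1}}<v_{j_{2}}<\cdots<v_{j_{m}}$ in $P$. A finite case analysis on the relative orderings of these two chains, combined with the cross-comparabilities (all non-adjacent pairs are non-edges of $G$, hence comparable in $P$), uses transitivity to force $v_{1}$ and $v_{2m}$ to be comparable, contradicting the fact that $\{v_{1},v_{2m}\}$ is a cycle edge. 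The case $m=3$ is short and illustrates the pattern.

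The main obstacle is executing this case analysis uniformly for all even $k\geq 6$ without excessive bookkeeping. A slicker alternative is to invoke the Gilmore--Hoffman characterization of interval graphs: a co-comparability graph with no induced $C_{4}$ is an interval graph, hence chordal. Part (2) of Corollary \ref{cor:SimplicialVertexIsShedding} then immediately furnishes vertex decomposability, bypassing the need to rule out longer induced cycles individually.
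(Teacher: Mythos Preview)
Your argument is correct and follows essentially the same route as the paper: both observe that incomparability graphs contain no induced $C_{n}$ for $n\geq 5$ (the paper simply cites Gallai's forbidden-subgraph list, whereas you derive it from perfectness for odd $n$ and a direct case analysis for even $n$, or alternatively from Gilmore--Hoffman), so that forbidding $C_{4}$ forces the graph to be chordal. Since your argument in fact excludes $C_{5}$ as well, invoking the full Main Theorem is unnecessary---Corollary~\ref{cor:SimplicialVertexIsShedding}(2) already suffices, as your Gilmore--Hoffman alternative correctly notes.
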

Equivalently, the unique obstruction to shellability in a poset is
$D$. We note that the incomparability graph of $D$ is $C_{4}$.
Gallai gave a forbidden subgraph characterization of incomparability
graphs of posets in \cite{Gallai:1967} (translated to English in
\cite[Chapter 3]{Alfonsin/Reed:2001}; a more accessible version of
the list is in \cite[Chapter 3.2]{Trotter:1992}). The forbidden subgraphs
include $C_{n}$ for $n\geq5$. Thus, Theorem \ref{thm:Billera-and-Myers}
follows from Corollary \ref{cor:SimplicialVertexIsShedding} (2).

Wachs studied obstructions to shellability further in \cite{Wachs:1999b},
where she asked about the obstructions to shellability in a flag complex.
Theorem \ref{thm:MainTheorem} gives a classification, which we summarize
in the following theorem:
\begin{thm}
\label{thm:FlagObstructions}The obstructions to shellability in flag
complexes are exactly the independence complexes of $C_{n}$, where
$n=4$ or $n\geq6$. \end{thm}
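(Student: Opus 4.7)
The plan is to prove the two set containments, leaning heavily on Theorem \ref{thm:MainTheorem}. The forward direction---that each $I(C_n)$ for $n=4$ or $n\geq 6$ is a flag obstruction---requires a small separate argument, while the reverse direction is essentially the contrapositive of the Main Theorem.

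For the forward direction, non-shellability of $I(C_n)$ for $n=4$ or $n\geq 6$ is already supplied by Theorem \ref{thm:CyclicNotShellable}. What remains is to verify the minimality condition, namely that every proper (induced) subcomplex is shellable. The key observation is that the induced subcomplexes of a flag complex $I(G)$ are exactly the flag complexes $I(G[S])$ for induced subgraphs $G[S] \subseteq G$. Every proper induced subgraph of $C_n$ is a disjoint union of paths, hence a forest, hence chordal; so Corollary \ref{cor:SimplicialVertexIsShedding}(2) immediately yields that every proper induced subcomplex of $I(C_n)$ is vertex decomposable, and in particular shellable.

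For the reverse containment, suppose $\Delta = I(G)$ is a flag obstruction to shellability. Since $\Delta$ is non-shellable, the contrapositive of Theorem \ref{thm:MainTheorem} forces $G$ to contain a chordless cycle $C$ of length $n$ with $n=4$ or $n\geq 6$. The induced subcomplex of $\Delta$ on the vertex set of $C$ is $I(C_n)$, which is non-shellable by Theorem \ref{thm:CyclicNotShellable}. Because $\Delta$ is an obstruction, this non-shellable induced subcomplex cannot be a proper one, so $V(C) = V(G)$, and hence $\Delta = I(C_n)$, as required.

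The proof is essentially a direct packaging of the Main Theorem together with Theorem \ref{thm:CyclicNotShellable}, so I do not expect any serious obstacle. The only conceptual point worth being careful about is the right notion of subcomplex for the word \emph{obstruction} in the flag setting, namely the induced subcomplex---this matches the Billera--Myers precedent (Theorem \ref{thm:Billera-and-Myers}), where subposets are induced, and it corresponds precisely to the forbidden-induced-subgraph flavor of the conclusion.
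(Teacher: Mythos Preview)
Your proof is correct and follows essentially the same approach as the paper's: both combine the Main Theorem with Theorem~\ref{thm:CyclicNotShellable}, and both verify minimality by observing that every proper induced subgraph of $C_n$ is chordal (a disjoint union of paths), hence shellable. Your write-up is somewhat more explicit in separating the two containments and in flagging that ``subcomplex'' here must mean induced subcomplex, but the argument is the same.
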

\begin{proof}
By Theorem \ref{thm:MainTheorem}, any non-shellable graph $G$ has
an induced subgraph (hence subcomplex) isomorphic to such a $C_{n}$.
In Theorem \ref{thm:CyclicNotShellable} we showed that such $C_{n}$
are not shellable, but any proper induced subgraph of $C_{n}$ is
chordal, hence shellable.
\end{proof}
A natural question suggested by Theorem \ref{thm:FlagObstructions}
is whether there is some similar characterization of obstructions
to shellability in non-flag complexes, where the minimal non-faces
form a hypergraph.\emph{ }One might be led to ask whether the hypergraph
of minimal non-faces is always cyclic in an obstruction to shellability.
However, examples studied by Wachs \cite{Wachs:1999b} show this is
not so, as follows.

Let $M_{n}$ be the simplicial complex with faces $\{1,2,3\}$, $\{2,3,4\}$,
$\dots$, $\{n-1,n,1\}$, $\{n,1,2\}$. In \cite[Lemma 5]{Wachs:1999b},
Wachs shows that $M_{5}$, $M_{6}$, and $M_{7}$ are obstructions
to shellability. 

Inspection verifies that $M_{7}$ is a flag complex, in fact that
$M_{7}=I(C_{7})$. The complexes $M_{5}$ and $M_{6}$ are not flag.
The minimal non-faces of $M_{5}$ are $\{1,2,4\},\{2,3,5,\},\{3,4,1\},\{4,5,2\},\{5,1,3\}$,
which is a cyclic hypergraph, insofar as there is an alternating sequence
of edges $e$ and vertices $v\in e$ which visits each edge and vertex
exactly once. However, the minimal non-faces of $M_{6}$ are $\{1,4\},\{2,5\},\{3,6\}$
and $\{1,3,5\},\{2,4,6\}$, as pictured in Figure \ref{fig:M6NonFaceHypergraph}.
I can see no natural generalization of cyclic graph which applies
directly to this hypergraph. Interestingly, however, there is an indirect
relationship: the edges are the facets of $I(C_{6})$.

\begin{figure}
\includegraphics{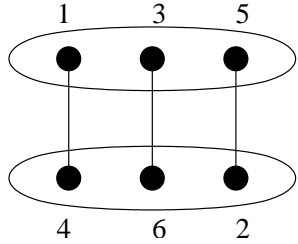}\caption{The minimal non-face hypergraph of $M_{6}$.\label{fig:M6NonFaceHypergraph}}

\end{figure}

\subsection{Domination numbers}

A set $S\subseteq V$ is a \emph{dominating set} if $\bigcup_{s\in S}N[s]=V$.
The \emph{dominating number} of $G$, denoted $\gamma(G)$, is the
minimum cardinality of a dominating set. Meshulam showed \cite[Theorem 1.2 (iii)]{Meshulam:2003}
that the homology of $I(G)$ vanishes below dimension $\gamma(G)-1$
when $G$ is a chordal graph.

We generalize this result in two respects. Let $i(G)$ be the \emph{independent
domination number}, that is, the minimum cardinality of a maximal
independent set. Any maximal independent set is a minimal dominating
set, so $\gamma(G)\leq i(G)$. 

Since a sequentially Cohen-Macaulay complex has homology vanishing
below the dimension of the smallest facet, and since the smallest
facet of $I(G)$ has cardinality $i(G)$, an immediate consequence
is the following:
\begin{cor}
If $G$ is any sequentially Cohen-Macaulay graph (over $k$), then
$\tilde{H}_{j}(I(G);k)=0$ for any $j<i(G)-1$.
\end{cor}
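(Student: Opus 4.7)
My plan is to unpack the definition of sequentially Cohen-Macaulay, using the two basic facts that the facets of $I(G)$ are precisely the maximal independent sets of $G$, and consequently the smallest facet of $I(G)$ has dimension exactly $i(G)-1$. Fix an integer $j$ with $j<i(G)-1$, and let $\Delta$ denote the pure $(j+1)$-skeleton of $I(G)$.

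The key step I would establish is that $\Delta$ coincides with the full $(j+1)$-skeleton of $I(G)$. Given any face $F$ of $I(G)$ with $|F|\le j+2$, I extend $F$ to a maximal independent set, which has cardinality at least $i(G)\ge j+2$; then I select any $(j+2)$-element subset of this facet containing $F$. That subset is itself a $(j+1)$-dimensional face of $I(G)$ containing $F$, so $F$ lies in the subcomplex generated by the $(j+1)$-dimensional faces, namely $\Delta$. Since reduced simplicial homology in degree $j$ depends only on faces of dimension at most $j+1$, this yields $\tilde{H}_{j}(I(G);k)=\tilde{H}_{j}(\Delta;k)$.

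To finish, I invoke the sequentially Cohen-Macaulay hypothesis: $\Delta$ is Cohen-Macaulay, and by construction it is pure of dimension $j+1$. Specializing the defining condition $\tilde{H}_{i}(\link_\Delta F;k)=0$ (for $i<\dim\link_\Delta F$) to $F=\emptyset$, in which case $\link_\Delta\emptyset=\Delta$, gives $\tilde{H}_{i}(\Delta;k)=0$ for every $i<j+1$, and in particular $\tilde{H}_{j}(\Delta;k)=0$. The only real content is the skeleton-identification step; everything else is definition-chasing, so I do not expect any substantive obstacle.
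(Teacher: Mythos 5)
Your proof is correct and is essentially the argument the paper has in mind: the paper simply cites the general fact that a sequentially Cohen-Macaulay complex has vanishing reduced homology below the dimension of its smallest facet, and you have supplied the standard proof of that fact (identify the pure $(j+1)$-skeleton with the full $(j+1)$-skeleton when $j+2\le i(G)$, then apply Cohen-Macaulayness of that skeleton at the empty face). The only content beyond the paper's one-line deduction is your verification of the skeleton identification and the observation that $\tilde H_j$ depends only on the $(j+1)$-skeleton, both of which are exactly the steps hidden in the paper's appeal to the known fact.
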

In particular, we recover the result \cite[Theorem 1.2 (iii)]{Meshulam:2003}
for chordal graphs and $\gamma(G)$ and extend it to a larger class
of graphs and a larger graph invariant.

\section{Graph constructions}

We now give examples of how shedding vertices can be used to show
that certain graph constructions respect shellability. 
\begin{lem}
\label{lem:DisjointUnionIsJoin}If $G=G_{1}\disjointunion G_{2}$,
then $I(G)=I(G_{1})*I(G_{2})$, the join of simplicial complexes.
Hence $G$ is vertex decomposable, shellable and/or sequentially Cohen-Macaulay
if and only if $G_{1}$ and $G_{2}$ are.\end{lem}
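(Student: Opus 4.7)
The plan is to decompose the lemma into two independent claims: first, the combinatorial identity $I(G) = I(G_1) * I(G_2)$, and second, the fact that the simplicial join preserves each of the three properties in both directions. Both steps are essentially invocations of standard facts.

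For the identity, I would argue directly from the definitions. A subset $S \subseteq V(G_1) \cup V(G_2)$ contains an edge of $G = G_1 \disjointunion G_2$ if and only if it contains an edge of $G_1$ or an edge of $G_2$, since by definition of the disjoint union there are no edges joining $V(G_1)$ to $V(G_2)$. Hence $S$ is independent in $G$ if and only if $S \cap V(G_i)$ is independent in $G_i$ for each $i \in \{1,2\}$, which is precisely the condition for $S$ to be a face of the simplicial join $I(G_1) * I(G_2)$.

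For the second step, I would cite (or briefly rederive) the preservation theorems for joins. The essential tools are the distributive formulas
\[
\link_{\Delta_1 * \Delta_2}(v) = \link_{\Delta_i}(v) * \Delta_j, \qquad (\Delta_1 * \Delta_2) \setminus v = (\Delta_i \setminus v) * \Delta_j
\]
for $v \in V(\Delta_i)$ (where $\{i,j\} = \{1,2\}$). From these, any shedding vertex of $\Delta_i$ is a shedding vertex of $\Delta_1 * \Delta_2$, and an induction on vertex count shows vertex decomposability is preserved by join. Shellability of a join follows from the standard lexicographic concatenation of shellings (Björner--Wachs), and sequential Cohen-Macaulayness follows from the Künneth-type formula for reduced homology of a join applied to links.

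The main obstacle is the \emph{only if} direction for each property, i.e., that each factor inherits vertex decomposability/shellability/sequential Cohen-Macaulayness from the join. For shellability this comes from restricting a shelling to a subcomplex; for sequential Cohen-Macaulayness it follows again from Künneth together with the fact that pure skeleta of joins decompose as joins of pure skeleta. The vertex decomposable case is the most delicate: one must check that a shedding vertex of $\Delta_1 * \Delta_2$ lying in $V(\Delta_i)$ also shed $\Delta_i$, and then recurse; here the shedding condition transfers cleanly because a face of $\link_{\Delta_i}(v)$ fails to be a facet of $\Delta_i \setminus v$ if and only if the corresponding face of the joined link fails to be a facet of the joined deletion (assuming both factors are nonempty, which we may).
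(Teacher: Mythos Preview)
Your proposal is correct and follows the same two-step structure as the paper: verify the join identity directly from the definitions, then appeal to the fact that joins preserve and reflect vertex decomposability, shellability, and sequential Cohen-Macaulayness. The paper is considerably terser---it declares the join identity and the vertex-decomposable case obvious and cites Wachs for the remaining two properties---but the underlying argument is the same as the one you have spelled out.
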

\begin{proof}
It is obvious from the definition that $I(G)$ is the given join and
that the join is vertex decomposable if and only if both $I(G_{1})$
and $I(G_{2})$ are. That the join of two complexes is shellable or
sequentially Cohen-Macaulay if and only if both complexes are is well
known and can be found for example in \cite{Wachs:2007}. (Part of
Lemma \ref{lem:DisjointUnionIsJoin} can be found as \cite[Lemma 2.4]{VanTuyl/Villarreal:2008}.)\end{proof}
\begin{example}
Adding a single vertex to $G$ via disjoint union forms a cone over
$I(G)$. Adding on the graph consisting of two vertices connected
by an edge via disjoint union corresponds to taking the suspension
of $I(G)$. Thus, for example, the union of $n$ disjoint edges is
homotopic to $S^{n-1}$.
\end{example}
Francisco and Hà \cite{Francisco/Ha:2008}, following Villarreal \cite[Theorem 2.2]{Villarreal:1990},
define a \emph{whisker} in a graph as a vertex of degree 1. A similar
idea seems to be studied in the wider graph theory literature under
the name of \emph{pendant}. We will prefer the latter term here. In
\cite{Francisco/Ha:2008} and \cite{VanTuyl/Villarreal:2008}, it
is shown that, speaking broadly, adding pendants to graphs has good
properties for maintaining shellability and the sequentially Cohen-Macaulay
property. Their construction essentially works because adding a pendant
adds a simplicial vertex. We give an obvious generalization:
\begin{prop}
\label{pro:PendantGen2}Let $G_{0}$ be a graph with a complete subgraph
$K$, and let $G$ be obtained from $G_{0}$ by adding a new vertex
$v$ with edges to all vertices of $K$. (That is, let $G$ be obtained
from $G_{0}$ by {}``starring $K$''.) Then any element of $K$
is a shedding vertex in $G$; conversely, $G$ is shellable (sequentially
Cohen-Macaulay) only if $G_{0}\setminus K$ is.\end{prop}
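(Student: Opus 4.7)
The plan is to handle the two assertions separately, using results already established in the paper.

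For the shedding-vertex assertion, the idea is simply to exhibit the neighborhood containment hypothesis of Lemma \ref{lem:NeighborhoodContainmentShedding}. Since $v$ is adjacent only to the vertices of $K$, one has $N[v] = K \cup \{v\}$. For any $w \in K$, the completeness of $K$ gives $K \setminus \{w\} \subseteq N(w)$, and by construction $v \in N(w)$, so
\[
N[v] = K \cup \{v\} \subseteq N[w].
\]
Lemma \ref{lem:NeighborhoodContainmentShedding} then immediately yields that $w$ is a shedding vertex of $G$.

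For the converse, I would identify the link of $v$ in $I(G)$ with the independence complex of $G_0 \setminus K$. Because $v$ is connected to exactly $K$, we have $G \setminus N[v] = G \setminus (K \cup \{v\}) = G_0 \setminus K$, and unwinding the definition of link gives
\[
\link_{I(G)} v \;=\; I(G \setminus N[v]) \;=\; I(G_0 \setminus K).
\]
Now invoke the standard fact that links of faces (in particular, of vertices) in a shellable complex are shellable, and likewise for sequentially Cohen-Macaulay; these appear e.g.\ in \cite{Bjorner/Wachs:1997} and \cite[Chapter III.2]{Stanley:1996}. Thus if $I(G)$ is shellable or sequentially Cohen-Macaulay, so is $I(G_0 \setminus K)$, which is exactly the conclusion.

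There is no real obstacle here: the first half is a direct application of a lemma already proved, and the second half is a one-line link identification plus the hereditary behavior of (non-pure) shellability and sequential Cohen-Macaulayness under taking links. The only point worth being careful about is using the non-pure versions of these hereditary properties, since $I(G)$ is typically not pure.
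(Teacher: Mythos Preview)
Your proof is correct and essentially the same as the paper's: the paper observes that $N[v]=K\cup\{v\}$ makes $v$ a simplicial vertex and then cites Corollary~\ref{cor:SimplicialVertexIsShedding}(1), which is itself just the neighborhood-containment argument via Lemma~\ref{lem:NeighborhoodContainmentShedding} that you wrote out directly. The converse is identical in both, identifying $\link_{I(G)}v=I(G_{0}\setminus K)$ and invoking that links inherit shellability/sequential Cohen-Macaulayness.
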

\begin{proof}
Since $N[v]=K\cup\{v\}$, we have that $v$ is a simplicial vertex;
hence any neighbor is a shedding vertex (Corollary \ref{cor:SimplicialVertexIsShedding}).
For the converse statement, we recall that links in a shellable/sequentially
Cohen-Macaulay complex have the same property and notice that $\link_{I(G)}v=G\setminus N[v]=G_{0}\setminus K$.
\end{proof}
The {}``clique-starring'' construction described in Proposition
\ref{pro:PendantGen2} adds a pendant when $\vert K\vert=1$. Whatever
the size of $K$, the construction adds a simplicial vertex to $G$.
We now consider a construction analogous to a pendant which adds a
$3$-simplicial path. 
\begin{prop}
Let $G_{0}$ be a graph with a complete subgraph $K$, and let $K_{1},K_{2}$
be disjoint subgraphs of $K$. Let $G$ be obtained from $G_{0}$
by adding new vertices $w_{1}$, $w_{2}$, and $v$, with $w_{1}$
adjacent to all vertices of $K_{1}$, $w_{2}$ adjacent to all vertices
of $K_{2}$, and $v$ adjacent to $w_{1}$ and $w_{2}$. Then $v$
is a shedding vertex of $G$. Conversely, $G$ is shellable (sequentially
Cohen-Macaulay) only if $G_{0}$ is.\end{prop}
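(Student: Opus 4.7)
The plan is to verify Condition \ref{con:SheddingCondition} directly for $v$ using the clique hypothesis on $K$, and then to deduce the converse from the identification $\link_{I(G)} v = I(G_0)$.

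First I would observe that $N[v] = \{v, w_1, w_2\}$, so $G \setminus N[v] = G_0$. To show that $v$ is a shedding vertex, I need to show that every independent set $S \subseteq G_0$ can be extended by $w_1$ or by $w_2$ while remaining independent. Since the neighbors of $w_i$ in $G$ are $\{v\} \cup K_i$, the extension $S \cup \{w_i\}$ is independent exactly when $S \cap K_i = \emptyset$.

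The key step is to argue that $S$ cannot meet both $K_1$ and $K_2$. This is where the hypotheses that $K$ is a clique and $K_1, K_2$ are (vertex-)disjoint subgraphs of $K$ are used: any chosen vertex in $S \cap K_1$ and vertex in $S \cap K_2$ would be two distinct elements of the clique $K$, hence adjacent, contradicting independence of $S$. Thus at least one of $S \cap K_1, S \cap K_2$ is empty, and $S$ can be extended accordingly. The degenerate case where some $K_i$ is empty causes no trouble, since then the corresponding $w_i$ is isolated in $G \setminus v$ and the extension is automatic.

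For the converse, I would note that an independent set of $G$ disjoint from $N[v]$ is exactly an independent set of $G_0$, so $\link_{I(G)} v = I(G_0)$. Since the link of any vertex in a shellable (respectively sequentially Cohen-Macaulay) complex inherits the same property, shellability or sequential Cohen-Macaulayness of $G$ passes to $G_0$. I do not anticipate any real obstacle: the construction is essentially designed so that $w_1, v, w_2$ forms a simplicial $3$-path of the type studied in Lemma \ref{lem:C4FreeSimp3Path}, with the clique hypothesis on $K$ ruling out any chordless $5$-path through $G_0$ that could obstruct $v$ from being a shedding vertex.
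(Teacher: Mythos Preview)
Your argument is correct. The converse is handled identically to the paper: $\link_{I(G)} v = I(G_0)$, and links inherit shellability and the sequentially Cohen-Macaulay property.

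For the forward direction you take a slightly different route from the paper. The paper invokes Lemma~\ref{lem:C4FreeSimp3Path}: it checks that $w_1,v,w_2$ is a simplicial $3$-path (any chordless extension $z_1,w_1,v,w_2,z_2$ would force $z_1\in K_1$, $z_2\in K_2$, hence a chord $z_1z_2$ in the clique $K$) and that $v$ lies in no chordless $4$-cycle (a common neighbor of $w_1$ and $w_2$ other than $v$ would lie in $K_1\cap K_2=\emptyset$). You instead verify Condition~\ref{con:SheddingCondition} directly, observing that an independent set in $G_0$ cannot meet both $K_1$ and $K_2$. Your argument is shorter and self-contained, since it does not route through the simplicial $3$-path machinery; the paper's version has the virtue of tying the construction explicitly back to the tool used in the proof of the main theorem, which is the point being illustrated in that section. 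Substantively the two arguments unwind to the same use of the clique hypothesis, as you yourself note in your final sentence.
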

\begin{proof}
By definition, the path $w_{1},v,w_{2}$ is $3$-simplicial, while
$v$ is in no chordless $4$-cycles because $K_{1}$ and $K_{2}$
are disjoint. Lemma \ref{lem:C4FreeSimp3Path} shows that $v$ is
a shedding vertex. Conversely, $\link_{I(G)}v=G_{0}$, and any link
in a shellable/sequentially Cohen-Macaulay complex has the same property. 

We notice in passing that $G\setminus v=G_{0}\cup\{w_{1},w_{2}\}$
is formed from $G_{0}$ by performing the construction of Proposition
\ref{pro:PendantGen2} on $K_{1}$ and $K_{2}$.
\end{proof}
Another graph construction is that of twinning. If two vertices $v$
and $w$ have the same neighbors, i.e., if $N(w)=N(v)$, then we say
$v$ and $w$ are \emph{true twins} if there is an edge $vw$, and
\emph{false twins} otherwise. There are corresponding graph constructions:
add a new vertex $w$ to $G$, together with edges to all neighbors
of some $v$. \label{sec:DistanceHered}The family of \emph{distance
hereditary graphs} can be defined as the graphs that can be built
from a single vertex by adding pendants, true twins, and false twins
\cite[Chapter 11.6]{Brandstadt/Le/Spinrad:1999}; twins are also useful
in proofs of the Perfect Graph Theorem \cite[Chapter 5.5]{Diestel:2005}.
\begin{prop}
If $v$ and $w$ are true twins, then $v$ and $w$ are shedding vertices.\end{prop}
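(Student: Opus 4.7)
The plan is to reduce the claim immediately to Lemma~\ref{lem:NeighborhoodContainmentShedding}. The key observation is simply that the true-twin hypothesis forces $N[v] = N[w]$: since $v$ and $w$ have the same open neighborhoods outside $\{v,w\}$ and since the edge $vw$ places $w \in N[v]$ and $v \in N[w]$, both closed neighborhoods equal $N(v) \cup \{v,w\}$. (If one reads the paper's phrase $N(v)=N(w)$ strictly, one has to reconcile it with the fact that simple graphs do not have loops; either way, the content of the definition is that $N[v]=N[w]$.)

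From here the proof is a one-liner. The inclusion $N[v] \subseteq N[w]$ holds, so Lemma~\ref{lem:NeighborhoodContainmentShedding} yields that $w$ is a shedding vertex for $G$. The reverse inclusion $N[w] \subseteq N[v]$ holds by symmetry, so the same lemma applied with the roles of $v$ and $w$ interchanged gives that $v$ is a shedding vertex for $G$.

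There is no real obstacle: the only thing to be careful about is unpacking the true-twin definition to confirm equality of \emph{closed} (not open) neighborhoods, after which the previously established shedding criterion does all the work. In particular one need not check Condition~\ref{con:SheddingCondition} directly, nor verify that $G \setminus v$ or $G \setminus N[v]$ are themselves vertex decomposable --- the statement asks only that $v$ and $w$ \emph{be} shedding vertices, which is exactly what Lemma~\ref{lem:NeighborhoodContainmentShedding} delivers.
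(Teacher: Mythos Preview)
Your proof is correct and follows exactly the paper's approach: observe that the true-twin hypothesis gives $N[v]=N[w]$, then invoke Lemma~\ref{lem:NeighborhoodContainmentShedding} (in both directions) to conclude that each of $v$ and $w$ is a shedding vertex. Your careful unpacking of why the closed neighborhoods coincide is a welcome clarification, but otherwise the argument is identical to the paper's one-line proof.
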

\begin{proof}
We note that $N[v]=N[w]$, and thus by Lemma \ref{lem:NeighborhoodContainmentShedding}
they are both shedding vertices.
\end{proof}
False twins are never shedding vertices, since any maximal independent
set including $w$ in $G\setminus N[v]$ is also maximal in $G\setminus v$. 
\begin{example}
The 4-cycle is obtained by adding a false twin of the middle vertex
in a 3-path. Thus, adding a false twin to a sequentially Cohen-Macaulay
graph can result in a non-sequentially Cohen-Macaulay graph.

\medskip{}

One more family of graph operations which has frequently been studied
is that of graph products. There are a large number of such operations,
based on different rules for putting edges on the Cartesian product
of the vertex set. We cannot examine all of them, but note that the
commonly considered operations of direct product and Cartesian product
of graphs do not respect shellability or the sequential Cohen-Macaulay
property, for the Cartesian product of two edges (shellable) is a
4-cycle (not sequentially Cohen-Macaulay), while the direct product
of an edge with a 3-cycle (both shellable) is the complete bipartite
graph $K_{3,3}$, which \cite[Corollary 3.11]{VanTuyl/Villarreal:2008}
shows is not sequentially Cohen-Macaulay.
\end{example}

\section{A comment on perfect graphs}

Herzog, Hibi, and Zheng \cite{Herzog/Hibi/Zheng:2006} point out that
classifying sequentially Cohen-Macaulay graphs is likely an intractable
problem. We recall their argument. If $\Delta$ is a simplicial complex,
then the order complex of the face lattice of $\Delta$ is a flag
complex, and it is sequentially Cohen-Macaulay if and only if $\Delta$
is. (The order complex of the face lattice is the barycentric subdivision
of $\Delta$.) Herzog, Hibi and Zheng conclude that characterizing
sequentially Cohen-Macaulay graphs is as difficult as characterizing
all sequentially Cohen-Macaulay complexes. The closely related property
of shellability is likely of a similar difficulty.

As we have seen, however, there are families of graphs in which classifying
the sequentially Cohen-Macaulay members of the family is possible.
That all chordal graphs are sequentially Cohen-Macaulay (Corollary
\ref{cor:SimplicialVertexIsShedding}) is an example of this type
of classification, as is the recursive characterization of sequentially
Cohen-Macaulay bipartite graphs in \cite[Corollary 3.11]{VanTuyl/Villarreal:2008}.
Other families of graphs may also have interesting answers. 

We notice that the argument of Herzog, Hibi, and Zheng can help indicate
the families of graphs in which we can hope for such a classification.
For example, a \emph{perfect graph }is one where every induced subgraph
has chromatic number equal to the size of its largest clique. The
Strong Perfect Graph Theorem says that a graph $G$ is perfect if
and only if there are no chordless odd cycles of length $\geq5$ in
either $G$ or its complement. Another fundamental result is that
the complement of a perfect graph is also perfect. See \cite{Alfonsin/Reed:2001}
for more information and references about perfect graphs. Both chordal
graphs and bipartite graphs are perfect, and characterizing the shellability
and/or sequential Cohen-Macaulay connectivity of their common super-family
would seem like a reasonable aim.

Unfortunately for this aim, poset (in)comparability graphs are perfect,
as can be proved either by the direct argument of coloring elements
by their rank, or else from Gallai's previously mentioned characterization
of poset comparability graphs, which lack odd cycles of length $\geq5$
\cite{Gallai:1967}. Moreover, a complex is sequentially Cohen-Macaulay
if and only if its face poset is sequentially Cohen-Macaulay. Thus,
characterizing the sequentially Cohen-Macaulay perfect graphs is at
least as hard as characterizing which complexes in general are sequentially
Cohen-Macaulay. 

Considering the intersection of a graph family $\mathcal{F}$  with
the family of poset incomparability graphs is a recommended exercise
before looking for shellings of graphs in $\mathcal{F}$.

\section*{Acknowledgements}

I would like to thank Chris Francisco and John Shareshian for helpful
references and stimulating discussions.

\bibliographystyle{hamsplain}
\bibliography{5_Users_paranoia_Documents_Research_Master}

\def\cprime{$'$}
\providecommand{\bysame}{\leavevmode\hbox to3em{\hrulefill}\thinspace}
\providecommand{\href}[2]{#2}
\begin{thebibliography}{10}

\bibitem{Aharoni/Berger/Meshulam:2005}
R.~Aharoni, E.~Berger, and R.~Meshulam, \emph{Eigenvalues and homology of flag
  complexes and vector representations of graphs}, Geom. Funct. Anal.
  \textbf{15} (2005), no.~3, 555--566.

\bibitem{Billera/Myers:1998}
Louis~J. Billera and Amy~N. Myers, \emph{Shellability of interval orders},
  Order \textbf{15} (1998/99), no.~2, 113--117.

\bibitem{Bjorner:1995}
Anders Bj{\"o}rner, \emph{Topological methods}, Handbook of combinatorics,
  Vol.\ 1,\ 2, Elsevier, Amsterdam, 1995, pp.~1819--1872.

\bibitem{Bjorner/Wachs/Welker:2009}
Anders Bj{\"o}rner, Michelle Wachs, and Volkmar Welker, \emph{On sequentially
  {C}ohen-{M}acaulay complexes and posets}, Israel J. Math. \textbf{169}
  (2009), 295--316, \mbox{arXiv:math/0702788}.

\bibitem{Bjorner/Wachs:1997}
Anders Bj{\"o}rner and Michelle~L. Wachs, \emph{Shellable nonpure complexes and
  posets. {II}}, Trans. Amer. Math. Soc. \textbf{349} (1997), no.~10,
  3945--3975.

\bibitem{Boulet/Fieux/Jouve:2008UNP}
Romain Boulet, Etienne Fieux, and Bertrand Jouve, \emph{Simplicial
  simple-homotopy of flag complexes of graphs}, \mbox{arXiv:0809.1751}.

\bibitem{Brandstadt/Le/Spinrad:1999}
Andreas Brandst{\"a}dt, Van~Bang Le, and Jeremy~P. Spinrad, \emph{Graph
  classes: a survey}, SIAM Monographs on Discrete Mathematics and Applications,
  Society for Industrial and Applied Mathematics (SIAM), Philadelphia, PA,
  1999.

\bibitem{Bruns/Herzog:1993}
Winfried Bruns and J{\"u}rgen Herzog, \emph{Cohen-{M}acaulay rings}, Cambridge
  Studies in Advanced Mathematics, vol.~39, Cambridge University Press,
  Cambridge, 1993.

\bibitem{Chvatal/Rusu/Sritharan:2002}
Va{\v{s}}ek Chv{\'a}tal, Irena Rusu, and R.~Sritharan, \emph{Dirac-type
  characterizations of graphs without long chordless cycles}, Discrete Math.
  \textbf{256} (2002), no.~1-2, 445--448.

\bibitem{Diestel:2005}
Reinhard Diestel, \emph{Graph theory}, third ed., Graduate Texts in
  Mathematics, vol. 173, Springer-Verlag, Berlin, 2005.

\bibitem{Dochtermann/Engstrom:2009}
Anton Dochtermann and Alexander Engstr{\"o}m, \emph{Algebraic properties of
  edge ideals via combinatorial topology}, Electron. J. Combin. \textbf{16}
  (2009), no.~2, Research Paper 2, approx.\ 24 pp.\ (electronic),
  \mbox{arXiv:0810.4120}.

\bibitem{Francisco/Ha:2008}
Christopher~A. Francisco and Huy~T{\`a}i H{\`a}, \emph{Whiskers and
  sequentially {C}ohen-{M}acaulay graphs}, J. Combin. Theory Ser. A
  \textbf{115} (2008), no.~2, 304--316, \mbox{arXiv:math/0605487}.

\bibitem{Francisco/VanTuyl:2007}
Christopher~A. Francisco and Adam Van~Tuyl, \emph{Sequentially
  {C}ohen-{M}acaulay edge ideals}, Proc. Amer. Math. Soc. \textbf{135} (2007),
  no.~8, 2327--2337 (electronic), \mbox{arXiv:math/0511022}.

\bibitem{Gallai:1967}
T.~Gallai, \emph{Transitiv orientierbare {G}raphen}, Acta Math. Acad. Sci.
  Hungar. \textbf{18} (1967), 25--66.

\bibitem{Ginsburg:1994}
John Ginsburg, \emph{Dismantlability revisited for ordered sets and graphs and
  the fixed-clique property}, Canad. Math. Bull. \textbf{37} (1994), no.~4,
  473--481.

\bibitem{Herzog/Hibi/Zheng:2006}
J{\"u}rgen Herzog, Takayuki Hibi, and Xinxian Zheng, \emph{Cohen-{M}acaulay
  chordal graphs}, J. Combin. Theory Ser. A \textbf{113} (2006), no.~5,
  911--916, \mbox{arXiv:math/0407375}.

\bibitem{Klivans:2007}
Caroline~J. Klivans, \emph{Threshold graphs, shifted complexes, and graphical
  complexes}, Discrete Math. \textbf{307} (2007), no.~21, 2591--2597,
  \mbox{arXiv:math/0703114}.

\bibitem{Kozlov:1999}
Dmitry~N. Kozlov, \emph{Complexes of directed trees}, J. Combin. Theory Ser. A
  \textbf{88} (1999), no.~1, 112--122.

\bibitem{Meshulam:2003}
Roy Meshulam, \emph{Domination numbers and homology}, J. Combin. Theory Ser. A
  \textbf{102} (2003), no.~2, 321--330.

\bibitem{Provan/Billera:1980}
J.~Scott Provan and Louis~J. Billera, \emph{Decompositions of simplicial
  complexes related to diameters of convex polyhedra}, Math. Oper. Res.
  \textbf{5} (1980), no.~4, 576--594.

\bibitem{Alfonsin/Reed:2001}
Jorge~L. Ram{\'{\i}}rez~Alfons{\'{\i}}n and Bruce~A. Reed (eds.), \emph{Perfect
  graphs}, Wiley-Interscience Series in Discrete Mathematics and Optimization,
  John Wiley \& Sons Ltd., Chichester, 2001.

\bibitem{Stanley:1996}
Richard~P. Stanley, \emph{Combinatorics and commutative algebra}, second ed.,
  Progress in Mathematics, vol.~41, Birkh\"auser Boston Inc., Boston, MA, 1996.

\bibitem{Trotter:1992}
William~T. Trotter, \emph{Combinatorics and partially ordered sets: {D}imension
  theory}, Johns Hopkins Series in the Mathematical Sciences, Johns Hopkins
  University Press, Baltimore, MD, 1992.

\bibitem{VanTuyl/Villarreal:2008}
Adam Van~Tuyl and Rafael~H. Villarreal, \emph{Shellable graphs and sequentially
  {C}ohen-{M}acaulay bipartite graphs}, J. Combin. Theory Ser. A \textbf{115}
  (2008), no.~5, 799--814, \mbox{arXiv:math/0701296}.

\bibitem{Villarreal:1990}
Rafael~H. Villarreal, \emph{Cohen-{M}acaulay graphs}, Manuscripta Math.
  \textbf{66} (1990), no.~3, 277--293.

\bibitem{Villarreal:2001}
\bysame, \emph{Monomial algebras}, Monographs and Textbooks in Pure and Applied
  Mathematics, vol. 238, Marcel Dekker Inc., New York, 2001.

\bibitem{Wachs:1999b}
M.~L. Wachs, \emph{Obstructions to shellability}, Discrete Comput. Geom.
  \textbf{22} (1999), no.~1, 95--103, \mbox{arXiv:math/9707216}.

\bibitem{Wachs:2007}
Michelle~L. Wachs, \emph{Poset topology: {T}ools and applications}, Geometric
  combinatorics, IAS/Park City Math. Ser., vol.~13, Amer. Math. Soc.,
  Providence, RI, 2007, \mbox{arXiv:math/0602226}, pp.~497--615.

\end{thebibliography}

\end{document}